\documentclass[12pt, notitlepage]{amsart}
\usepackage{latexsym, amsfonts, amsmath, amssymb, amsthm}

\newcommand{\tmscript}[1]{\text{\scriptsize{$#1$}}}
\newtheorem{lemma}{Lemma}

\newtheorem{theorem}{Theorem}
\newtheorem{corollary}{Corollary}
\newtheorem{definition}{Definition}

\newcommand{\assign}{:=}

\newcommand{\mathd}{\mathrm{d}}
\newcommand{\mathi}{\mathrm{i}}
\newcommand{\tmop}{\text}

\newcommand{\tmem}{\textit}
\newcommand{\tmstrong}{\textbf}
\newcommand{\tmtextbf}{\textbf}
\newcommand{\tmtextit}{\textit}

\pagestyle{headings}

\topmargin -.5in
\textheight 9.25in
\oddsidemargin -0.25in
\evensidemargin -0.25in
\textwidth 6.5in

\sloppy
\flushbottom
\parindent 1em
\leftmargini 2em
\leftmarginv .5em
\leftmarginvi .5em

\marginparwidth 48pt
\marginparsep 10pt
\columnsep 10mm

\usepackage{graphicx}
\usepackage{mathrsfs}

\begin{document}

\title{A structure theorem in Probabilistic Number Theory}

\begin{abstract}
  We prove that if two additive functions (from a certain class) take large
  values with roughly the same probability then they must be identical. This
  is a consequence of a structure theorem making clear the inter-relation
  between the distribution of an additive function on the integers, and its
  distribution on the primes. 
\end{abstract}

\author{Maksym Radziwi\l\l}
\address{Department of Mathematics \\ Stanford University\\
450 Serra Mall, Bldg. 380\\Stanford, CA 94305-2125}
\email{maksym@stanford.edu}
\thanks{The author is partially supported by a NSERC PGS-D award}
\subjclass[2010]{Primary: 11N64, Secondary: 11N60, 11K65, 60F10}

\maketitle

\section{Introduction.}

Let $g$ be an additive function (that is, $g (mn) = g (m) + g (n)$ for $(m, n)
= 1$ and $g (p^k) = g (p)$ on the primes). According to a probabilistic model
of Mark Kac {\cite{kac}}, the distribution of the $g (n)$'s (with $n \leqslant
x$ and $x$ large) is predicted by the random variable,
\begin{equation}
  \sum_{p \leqslant x} g (p) X_p . \label{random}
\end{equation}
In (\ref{random}) the $X_p$'s are independent random variables with
$\mathbb{P}(X_p = 1) = 1 / p$ and $\mathbb{P}(X_p = 0) = 1 - 1 / p$.
According to the model, for most $n \leqslant x$ the values $g (n)$ cluster
around the mean $\mu (g ; x)$ of (\ref{random}), and within an error of $O
\left( \sigma (g ; x) \right)$. Here $\mu (g ; x)$ and $\sigma^2 (g ; x)$ are
respectively the mean and the variance of (\ref{random}). Thus,
\begin{eqnarray*}
  \mu (g ; x) = \sum_{p \leqslant x} \frac{g (p)}{p} & \tmop{and} & \sigma^2
  (g ; x) = \sum_{p \leqslant x} \frac{g (p)^2}{p} \cdot \left( 1 -
  \frac{1}{p} \right) .
\end{eqnarray*}
When looking at large values of $g$ it is natural to consider
\begin{equation}
  \frac{1}{x} \cdot \# \left\{ n \leqslant x : \frac{g (n) - \mu (g ;
  x)}{\sigma (g ; x)} \geqslant \Delta \right\}, \label{frequency}
\end{equation}
with $\Delta$ growing to infinity with $x$. For an additive function $g$
(with, say, $g (p) = O (1)$ and $\sigma (g ; x) \rightarrow \infty$) in the
range $\Delta \leqslant o (\sigma^{1 / 3})$ the frequency (\ref{frequency}) is
asymptotic to a normal distribution. For $\Delta \geqslant \varepsilon \cdot
\sigma^{1 / 3}$ the distribution of (\ref{frequency}) is no more Gaussian, and
a rather complicated asymptotic formulae emerges (see Theorem 2 in
{\cite{Maciulis}} and {\cite{Linnik}} for a probabilistic analogue).
Relatively little is known beyond the range $\Delta \asymp \sigma$ (except for
$\omega (n)$, see {\cite{Hensley}}, {\cite{Tenenbaum}}).

Our objective in this paper is to study the interralation between the
distribution of large values of an additive function on the integers (that is,
(\ref{frequency}) with $\Delta$ growing to infinity) and the distribution of
the values of the additive function on the primes. To fix ideas, and to
simplify some of our arguments, we will restrict ourselves to the following
class of aditive functions.

\begin{definition}
  An additive function $g$ belongs to $\mathcal{C}$ if and only if
  \begin{itemize}
    \item $g$ is strongly additive and $g (p) = O (1)$.
    
    \item There is a distribution function $\Psi (g ; t)$ such that uniformly
    in $t \in \mathbb{R}$
    \begin{equation}
      \frac{1}{\pi (x)} \sum_{\tmscript{\begin{array}{c}
        p \leqslant x\\
        g (p) \leqslant t
      \end{array}}} 1 = \Psi (g ; t) + O_{\varepsilon} \left( (\log x)^{-
      \varepsilon} \right) . \label{speed}
    \end{equation}
    The moments of $\Psi (g ; t)$ are non-negative, and the second moment is
    non-zero.
  \end{itemize}
\end{definition}

Our assumptions are roughly equivalent to requiring that $\Psi (f ; t)$ has at
least as much mass in $t \geqslant 0$ as in $t \leqslant 0$, and that $\Psi (f
; t)$ is not concentrated at $t = 0$. With more work $g (p) = O (1)$ can be
replaced by $\#\{p \leqslant x : g (p) \geqslant t\} \ll \pi (x) e^{- \psi (t)
t}$ for a $\psi (t) \rightarrow \infty$ arbitrarily slowly. \

Assumption (\ref{speed}) is essentially best possible given our current state
of knowledge. Indeed, to understand the large deviation behavior of $g$ in the
range $\Delta \asymp \sigma$, we need an asymptotic formulae for the
mean-value of $\exp (zg (n))$ uniform in $z$ in a small neighborhood around
$0$. Without assumption (\ref{speed}), and given the generality of $g$, this
is a very difficult problem.

Notable members of the class $\mathcal{C}$ are $\omega (n)$, the number of
distinct prime factors of $n$, its variant counting the number of prime
factors in different arithmetic progression with different weights, and also
more wildly behaved additive functions such as $g (p^k) = g (p) =\{\alpha p\}$
with $\alpha$ irrational (in that case $\Psi (g ; t) = t$, $0 \leqslant t
\leqslant 1$). A common feature of functions in $\mathcal{C}$ s that on
average they are of moderate size. A manifestation of this property is that
for an $g \in \mathcal{C}$,
\[ \mu (g ; x) \sim \int_{- \infty}^{\infty} t \mathd \Psi (g ; t) \cdot
   \tmop{loglog} x \text{ and } \sigma^2 (g ; x) \sim \int_{- \infty}^{\infty}
   t^2 \mathd \Psi (g ; t) \cdot \tmop{loglog} x. \]
The above allows us to assume without loss of generality that $\sigma (f ; x)
\sim \sigma (g ; x)$ for any $f, g \in \mathcal{C}$ (it suffices to
renormalize $g$ by a constant factor). \

Our main result is a structure theorem classifying the frequency of large
values of $g \in \mathcal{C}$ in terms of the distribution of their values on
the primes. We will be thus comparing (\ref{frequency}), the distribution of
$g$ on the integers, with $\Psi (g ; t)$, the distribution on the primes.

\begin{theorem}
  \label{theorem1}Let $f, g \in \mathcal{C}$. Without loss of generality
  suppose that $\sigma (f ; x) \sim \sigma (g ; x)$ and let $\sigma \assign
  \sigma (x)$ denote a function such that $\sigma (f ; x) \sim \sigma (x) \sim
  \sigma (g ; x)$. The relation
  \begin{equation}
    \frac{1}{x} \cdot \# \left\{ n \leqslant x : \frac{f (n) - \mu (f ;
    x)}{\sigma (f ; x)} \geqslant \Delta \right\} \sim \frac{1}{x} \cdot \#
    \left\{ n \leqslant x : \frac{g (n) - \mu (g ; x)}{\sigma (g ; x)}
    \geqslant \Delta \right\}, \label{compare}
  \end{equation}
  holds uniformly in the range
  \begin{enumerate}
    \item \label{Part1}$1 \leqslant \Delta \leqslant o (\sigma^{1 / 3})$ --
    always (the distribution is normal)
    
    \item \label{Part2}$1 \leqslant \Delta \leqslant o (\sigma^{\alpha})$ with
    an $1 / 3 < \alpha < 1$ if and only if
    \[ \int_{- \infty}^{\infty} t^k \mathd \Psi (f ; t) = \int_{-
       \infty}^{\infty} t^k \mathd \Psi (g ; t), \]
    for all $k = 3, 4, \ldots, \varrho (\alpha)$, where $\varrho (\alpha)
    \assign \left\lceil (1 + \alpha) / (1 - \alpha) \right\rceil$.
    
    \item \label{Part3}$1 \leqslant \Delta \leqslant o (\sigma)$ if and only
    if $\Psi (f ; t) = \Psi (g ; t)$ except for at most a countable set of $t
    \in \mathbb{R}$.
    
    \item \label{Part4}$1 \leqslant \Delta \leqslant \varepsilon \sigma$ for
    some $\varepsilon > 0$, if and only if $f = g$. 
  \end{enumerate}
\end{theorem}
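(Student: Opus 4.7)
The plan is to reduce the theorem to an analysis of the moment generating function (MGF)
\[ M_g(z; x) \assign \frac{1}{x}\sum_{n \leq x} e^{zg(n)} \]
in a complex neighborhood of $z = 0$, and then to convert tail-frequency statements into statements about a rate function via a saddle-point method. By a Hal\'asz / Selberg--Delange type argument combined with hypothesis (\ref{speed}), one should produce a factorization of the form
\[ M_g(z; x) = F_g(z;x)\,\exp\!\bigl(\Lambda_g(z)\,\log\log x\bigr)\,\bigl(1 + o(1)\bigr), \qquad \Lambda_g(z) \assign \int_{-\infty}^{\infty} (e^{zt}-1)\,\mathd\Psi(g;t), \]
uniformly for $z$ in a small disc around $0$, with $F_g$ holomorphic and bounded away from $0$ and $\infty$. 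The role of (\ref{speed}) is to translate the Euler-product sum $\sum_{p \leq x}(e^{zg(p)}-1)/p$ into the Laplace transform $\Lambda_g$ of $\mathd\Psi(g;t)$; the $(\log x)^{-\varepsilon}$ error in (\ref{speed}) is what permits uniformity in~$z$.

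Next I would apply a saddle-point (Esscher-tilt) analysis. For $\Delta \leq \varepsilon\sigma$, one solves $\Lambda_g'(z^*)\log\log x = \mu(g;x) + \Delta\sigma$; linearizing at $z = 0$ gives $z^* \sim \Delta/\sigma \asymp \sigma^{\alpha-1}$ whenever $\Delta \asymp \sigma^{\alpha}$. A contour integral around this saddle yields
\[ \frac{1}{x}\#\bigl\{n \leq x : g(n) - \mu(g;x) \geq \Delta\sigma\bigr\} \sim \frac{F_g(z^*;x)}{z^*\sigma\sqrt{2\pi}}\exp\!\Bigl( \Lambda_g(z^*)\log\log x - z^*(\mu + \Delta\sigma)\Bigr). \]
Expanding $\Lambda_g(z^*)$ in powers of $z^*$ rewrites the exponent as
\[ -\tfrac{1}{2}\Delta^2 \;-\; \log\log x \,\cdot\, \sum_{k \geq 3}\frac{(z^*)^k\,m_k(g)}{k!} \;+\; (\text{lower order}), \qquad m_k(g) \assign \int t^k\,\mathd\Psi(g;t). \]
Using $\log\log x \asymp \sigma^2$, the $k$-th correction is of order $\sigma^{k(\alpha-1)+2}$, which tends to $\infty$ if and only if $k \leq \lceil(1+\alpha)/(1-\alpha)\rceil = \varrho(\alpha)$, and tends to $0$ otherwise.

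Once these tools are in place the four parts drop out. Part~(\ref{Part1}) is immediate: $\Delta = o(\sigma^{1/3})$ kills every $k \geq 3$ correction, leaving the universal Gaussian rate $\Delta^2/2$. For Part~(\ref{Part2}), the ``if'' direction follows since matching $m_k(f) = m_k(g)$ for $3 \leq k \leq \varrho(\alpha)$ removes exactly the diverging corrections from the difference of the two rate functions; for the ``only if'' direction, a discrepancy in the smallest offending $m_k$ contributes a factor $\exp\!\bigl(c\,\sigma^{k(\alpha-1)+2}\bigr)$ to the ratio of frequencies, which must diverge. Part~(\ref{Part3}) follows by sending $\alpha \to 1$ in Part~(\ref{Part2}): all moments of $\Psi$ must agree, and since $g(p) = O(1)$ makes $\Psi$ compactly supported, the Hamburger moment problem is determinate, so $\Psi(f;t) = \Psi(g;t)$ outside a countable set of discontinuities.

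The main obstacle will be Part~(\ref{Part4}), where one must upgrade $\Psi_f = \Psi_g$ (which equates only the densities of $f, g$ on primes, weighted or unweighted) to the pointwise statement $f(p) = g(p)$ for every $p$. Here $\Delta \leq \varepsilon\sigma$ corresponds to $z^*$ ranging over a fixed (non-shrinking) disc, so agreement of frequencies throughout this range forces $M_f(z;x) = M_g(z;x)\,(1+o(1))$ uniformly on that disc. Comparing Euler products,
\[ \prod_{p \leq x}\frac{1 + (e^{zf(p)}-1)/p}{1 + (e^{zg(p)}-1)/p} = 1 + o(1), \]
and taking logarithmic derivatives yields, after Dirichlet-series manipulation, an approximate identity $\sum_{p \leq x}\bigl(e^{zf(p)}-e^{zg(p)}\bigr)p^{-s} \approx 0$ whose inversion (via a Perron / Tauberian argument, testing against a rich family of $z$'s) should force $e^{zf(p)} = e^{zg(p)}$ for every prime, hence $f(p) = g(p)$. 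The delicate technical point is that the individual Euler factors are all close to $1$, so one needs sharp complex-analytic control on a fixed disc to separate the contribution of each prime; this is where I expect the heaviest work.
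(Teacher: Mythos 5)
Your approach to Parts \ref{Part1}--\ref{Part3} matches the paper's in essence: a saddle-point\slash Esscher-tilt asymptotic for $\mathcal{D}_h(x;\Delta) := \tfrac{1}{x}\#\{n\leq x : (h(n)-\mu(h;x))/\sigma(h;x)\geq\Delta\}$ drawn from Maciulis and Hwang (Lemmas \ref{largedevlemma1} and \ref{largedevlemma2}), followed by a Cramer-series expansion of the rate function in the small parameter $z^\ast \asymp \Delta/\sigma$, and the observation that the $k$-th coefficient enters multiplied by $(\log\log x)(z^\ast)^k \asymp \sigma^{k(\alpha-1)+2}$, which is unbounded if and only if $k\leq\varrho(\alpha)$. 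Two small gaps here: in Part~\ref{Part2} the ``only if'' direction needs you to show that agreement of the Taylor coefficients of the composite rate function $A(h;w(h;\cdot))$ forces agreement of the moments of $\Psi(h;\cdot)$ --- the paper spends nontrivial effort untangling $A$, $w$, and $w^{-1}$ to extract this --- and in Part~\ref{Part3}, letting $\alpha\to 1$ only pins down moments $k\geq 3$, so a residual discrepancy $\hat{\Psi}(f;z)-\hat{\Psi}(g;z)=az^2+bz$ survives and must be killed by a separate argument (the paper uses $|\hat{\Psi}(h;\mathi t)|\leq 1$ for real $t$); invoking the Hamburger moment problem alone does not address the first moment.

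Part~\ref{Part4} is where your proposal has a genuine gap. The diagnosis is right --- one must upgrade $\Psi(f;\cdot)=\Psi(g;\cdot)$ to $f(p)=g(p)$ prime by prime --- but the proposed mechanism, ``taking logarithmic derivatives of the Euler-product quotient and inverting via Perron/Tauberian,'' does not go through. There is no Dirichlet variable $s$ to invert over: what the frequency comparison actually yields (after partial summation and Lemma~\ref{multfunction}) is $L(f;\kappa)=L(g;\kappa)e^{-\beta\kappa}$ for $\kappa$ in a \emph{real} interval, not a complex disc, since the saddle $z^\ast$ is real. And the $o(1)$ control on a sum $\sum_p (e^{zf(p)}-e^{zg(p)})/p$ of terms each tending to $0$ cannot, by itself, force termwise agreement; there is no orthogonality to exploit by varying $z$. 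The paper's substitute is a rigidity argument that is the real content of Part~\ref{Part4}: both $L(f;\cdot)$ and $L(g;\cdot)$ are \emph{entire} (Lemma~\ref{entireL}), so the real-interval identity propagates by analytic continuation to $L(f;z)=L(g;z)e^{-\beta z}$ on all of $\mathbb{C}$, equating the zero sets. Those zero sets carry explicit arithmetic: each zero of $L(h;\cdot)$ is of the form $\bigl((2k+1)\pi\mathi + \log(p-1)\bigr)/h(p)$, so a single zero records a prime $p$ and the value $h(p)$. Equating the two sets and writing $p-1 = m^{2^a}$ (the exponent must be a power of $2$, else $p$ factors) forces the odd ratio $(2\ell+1)/(2k+1)$ arising from a matched pair of zeros to be $1$, hence $p=q$ and $f(p)=g(p)$. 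This identification of individual primes through the zero locus of an entire function is the idea missing from your proposal; without it, the Euler-product manipulation does not close.
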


{\noindent}\tmtextbf{Example. }Let $0 < \alpha, \beta < 1$ be two irrational
numbers. Let $f, g$ be two additive functions with $f (p^k) =\{\alpha p\}$ and
$g (p^k) =\{\beta p\}$. By Vinogradov's theorem {\cite{Vinogradov}} (on the
distribution of $\{\alpha p\}$), both $f, g \in \mathcal{C}$ and $\Psi (f ; t)
= t = \Psi (g ; t)$ for $0 \leqslant t \leqslant 1$. Thus by Theorem
\ref{theorem1}, $f, g$ are similarly distributed on the integers for $1
\leqslant \Delta \leqslant o (\sigma)$ but not when $\Delta \asymp \sigma$,
unless $f = g$, that is $\alpha = \beta$.{\hspace*{\fill}}{\medskip}

Part \ref{Part4} of Theorem \ref{theorem1} is its most surprising consequence.
In order to single it out we restate it below as a Corollary. \

\begin{corollary}
  \label{Corollary2}Let $f, g \in \mathcal{C}$. Suppose that (\ref{compare})
  holds uniformly in $1 \leqslant \Delta \leqslant \varepsilon \sigma$ for
  some $\varepsilon > 0$. Then $f = c \cdot g$ with some constant $c \neq 0$. 
\end{corollary}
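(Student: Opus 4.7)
The corollary reduces directly to part \ref{Part4} of Theorem \ref{theorem1} by absorbing the normalization $\sigma(f;x) \sim \sigma(g;x)$ into the multiplicative constant $c$. The substantive content is entirely in Theorem \ref{theorem1}; the corollary is essentially a rephrasing of part \ref{Part4}.

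First, I fix the scaling constant. For any $h \in \mathcal{C}$ the excerpt gives $\sigma^2(h;x) \sim B_h \cdot \log\log x$ with $B_h := \int_{-\infty}^{\infty} t^2 \, \mathd\Psi(h;t)$, and $B_h > 0$ by the non-zero second moment assumption. Setting $c := \sqrt{B_f / B_g} > 0$ and $\tilde g := c \cdot g$, I verify $\tilde g \in \mathcal{C}$: it is strongly additive with $\tilde g(p) = O(1)$, its prime distribution is $\Psi(\tilde g; t) = \Psi(g; t/c)$ (inheriting the rate in (\ref{speed})), all its moments are non-negative, and its second moment equals $c^2 B_g = B_f > 0$. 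By construction $\sigma(\tilde g; x) \sim \sigma(f; x)$.

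Second, I observe that the hypothesis of the corollary is preserved. Since $c > 0$, the standardized ratio satisfies
\[
\frac{cg(n) - \mu(cg;x)}{\sigma(cg;x)} = \frac{g(n) - \mu(g;x)}{\sigma(g;x)},
\]
so the right-hand side of (\ref{compare}) is unchanged when $g$ is replaced by $\tilde g$. Therefore (\ref{compare}) continues to hold uniformly in $1 \leqslant \Delta \leqslant \varepsilon \sigma$ for the pair $(f, \tilde g)$. All hypotheses of part \ref{Part4} of Theorem \ref{theorem1} are now in force, and its conclusion delivers $f = \tilde g = c \cdot g$ with $c \neq 0$, as required.

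\textbf{Main obstacle.} There is no genuinely new obstacle: the only things to verify are that $\mathcal{C}$ is closed under positive scalar multiples and that the standardized frequency (\ref{frequency}) is invariant under such multiples, both of which are immediate from the definitions. The entire weight of the argument rests on the already-established Theorem \ref{theorem1}(\ref{Part4}).
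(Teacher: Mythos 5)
Your proof is correct and matches the paper's intent exactly: the paper treats Corollary~\ref{Corollary2} as a direct restatement of Part~\ref{Part4} of Theorem~\ref{theorem1}, with the remark in the introduction that ``it suffices to renormalize $g$ by a constant factor'' to arrange $\sigma(f;x)\sim\sigma(g;x)$, and you have simply made that renormalization explicit ($c=\sqrt{B_f/B_g}$, closure of $\mathcal{C}$ under positive scaling, invariance of the standardized ratio).
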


A heuristic reason to expect Corollary \ref{Corollary2} (or Part \ref{Part4}
of Theorem 1) is seen most clearly by considering $\omega (n)$ and its
modification $\omega^{\ast} (n)$ which we set to be 0 on the prime 2 and 1 on
all the remaining primes. Letting $f = \omega$ and $g = \omega^{\ast}$ a
direct computation based on Sathe and Selberg's work
{\cite{Sathe}}{\cite{selberg}} reveals that the left and right-hand side of
(\ref{compare}) differ by a constant, but only in the range $\Delta \asymp
\sigma$. Thus the large deviations range $\Delta \geqslant \varepsilon \sigma$
can ``detect'' the values of an additive function at every prime.

Another consequence of Theorem \ref{theorem1}: if (\ref{compare}) holds
uniformly in the range $1 \leqslant \Delta \leqslant o (\sigma^{1 / 3 +
\varepsilon})$, for some fixed $\varepsilon > 0$, then (\ref{compare}) also
holds for $1 \leqslant \Delta \leqslant o (\sigma^{1 / 2})$. We highlight this
``discrete'' behavior in the Corollary below.

\begin{corollary}
  \label{Corollary3}Let $f, g \in \mathcal{C}$ and $\alpha \in (1 / 3 ; 1)$.
  If (\ref{compare}) holds uniformly in $1 \leqslant \Delta \leqslant o
  (\sigma^{\alpha})$ then (\ref{compare}) also holds uniformly in $1 \leqslant
  \Delta \leqslant o (\sigma^{\alpha + \delta})$ provided that $\varrho
  (\alpha + \delta) = \varrho (\alpha)$ and with $\varrho (\cdot)$ defined as
  in Theorem \ref{theorem1}. 
\end{corollary}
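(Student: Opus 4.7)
The plan is to reduce the corollary directly to Part \ref{Part2} of Theorem \ref{theorem1}, applying the biconditional in both directions. The key observation is that $\varrho(\alpha) = \lceil(1+\alpha)/(1-\alpha)\rceil$ is an integer-valued step function on $(1/3, 1)$, constant on the open intervals between its countably many jumps; the hypothesis $\varrho(\alpha+\delta) = \varrho(\alpha)$ places $\alpha$ and $\alpha+\delta$ on a common plateau, so the lists of moment equalities singled out by Theorem \ref{theorem1}(\ref{Part2}) at the two levels coincide.

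Concretely, I would first invoke the ``only if'' direction at $\alpha$: the assumed validity of (\ref{compare}) in the range $1 \leq \Delta \leq o(\sigma^{\alpha})$ supplies the identities
\[
\int_{-\infty}^{\infty} t^k\, \mathd\Psi(f;t) = \int_{-\infty}^{\infty} t^k\, \mathd\Psi(g;t) \qquad (k = 3, 4, \ldots, \varrho(\alpha)).
\]
Because $\varrho(\alpha+\delta) = \varrho(\alpha)$, this is precisely the list of moment conditions demanded by the ``if'' direction of the same Part \ref{Part2} at the level $\alpha+\delta$. Invoking that direction then delivers (\ref{compare}) uniformly in $1 \leq \Delta \leq o(\sigma^{\alpha+\delta})$.

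The only point requiring (very mild) verification is that $\alpha+\delta$ still lies in $(1/3, 1)$ so that Theorem \ref{theorem1}(\ref{Part2}) is actually applicable at that level: the lower bound is free from $\alpha > 1/3$ and $\delta > 0$, and the upper bound follows from the finiteness of $\varrho(\alpha+\delta)$. There is therefore no genuine obstacle; the corollary is simply the statement that, restricted to a single constancy interval of $\varrho$, the biconditional in Theorem \ref{theorem1}(\ref{Part2}) makes the validity of (\ref{compare}) on $1 \leq \Delta \leq o(\sigma^{\beta})$ independent of the precise exponent $\beta$ within that interval. The ``hard'' part of the work has already been done inside Theorem \ref{theorem1}; here we merely repackage it.
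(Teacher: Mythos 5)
Your argument is correct and is exactly the route the paper intends: Corollary \ref{Corollary3} is a direct repackaging of the biconditional in Theorem \ref{theorem1}, Part \ref{Part2}, using the "only if" direction at exponent $\alpha$ and the "if" direction at exponent $\alpha+\delta$, with the hypothesis $\varrho(\alpha+\delta)=\varrho(\alpha)$ guaranteeing the two moment lists coincide. Nothing is missing.
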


Theorem \ref{theorem1} characterizes those $f \in \mathcal{C}$ that are
``Poisson distributed'' on the integers. Following Sathe and Selberg's
{\cite{Sathe}}{\cite{selberg}} work we know that $\omega (n)$ is Poisson
distributed, in the sense that,
\begin{equation}
  \# \left\{ n \leqslant x : \omega (n) = k \right\} \sim \frac{x}{\log x}
  \cdot \frac{(\tmop{loglog} x)^k}{k!} \label{Poisson}
\end{equation}
uniformly in $k \sim \tmop{loglog} x$. Since $\omega \in \mathcal{C}$,
combining (\ref{Poisson}) with Theorem \ref{theorem1} we obtain the following.
\ \

\begin{corollary}
  \label{Converse}Let $f \in \mathcal{C}$. Denote by $\tmop{Poisson}
  (\lambda)$ a random variable with Poisson distribution with parameter
  $\lambda$. The relation
  \[ \frac{1}{x} \cdot \# \left\{ n \leqslant x : \frac{f (n) - \mu (f ;
     x)}{\sigma (f ; x)} \geqslant \Delta \right\} \sim \mathbb{P} \left(
     \frac{\tmop{Poisson} (\log\log x) - \log\log x
     x}{\sqrt{\log\log x}} \geqslant \Delta \right), \]
  holds if and only if there is an $\alpha > 0$ such that $\Psi (f ; t) = 0$
  for $t < \alpha$ and $\Psi (f ; t) = 1$ for $t > \alpha$. \ 
\end{corollary}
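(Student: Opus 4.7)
The strategy is to reduce Corollary \ref{Converse} to Theorem \ref{theorem1}, Part \ref{Part3}, by recognizing the Poisson tail on the right-hand side as the large-deviation frequency of the specific additive function $\omega$. Concretely, I plan to pair $f$ with $g := \alpha \omega$ for an appropriate $\alpha > 0$, then chain together two asymptotic equivalences.

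First I would convert the Poisson probability into an integer frequency for $\omega$. Since $\omega \in \mathcal{C}$ with $\Psi(\omega;\cdot)$ equal to the Heaviside step at $1$, summing the Sathe--Selberg asymptotic (\ref{Poisson}) over $k \geq \log\log x + \Delta \sqrt{\log\log x}$ and using Mertens' estimates $\mu(\omega;x) = \log\log x + O(1)$, $\sigma(\omega;x) = \sqrt{\log\log x}\,(1+o(1))$, I expect to derive
\[
  \frac{1}{x}\,\#\!\left\{ n \leq x : \frac{\omega(n) - \mu(\omega;x)}{\sigma(\omega;x)} \geq \Delta \right\} \sim \mathbb{P}\!\left( \frac{\mathrm{Poisson}(\log\log x) - \log\log x}{\sqrt{\log\log x}} \geq \Delta \right),
\]
uniformly in $1 \leq \Delta = o(\sqrt{\log\log x})$. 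The small subtlety is the tail $k \gg \log\log x$, where (\ref{Poisson}) is no longer a $\sim$-statement; standard Hardy--Ramanujan type upper bounds on $\pi_k(x)$, combined with the super-exponential decay of the Poisson tail, show that both sides are dominated by the terms with $k \sim \log\log x$.

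Next I invoke Theorem \ref{theorem1}, Part \ref{Part3}. For any $\alpha > 0$ the function $g := \alpha \omega$ lies in $\mathcal{C}$, has $\Psi(g;\cdot)$ a point mass at $\alpha$, and the standardization $(g(n)-\mu(g;x))/\sigma(g;x)$ is literally identical to $(\omega(n)-\mu(\omega;x))/\sigma(\omega;x)$. Choosing $\alpha := \sqrt{\int t^2 \mathd \Psi(f;t)}$, which is strictly positive by the definition of $\mathcal{C}$, arranges $\sigma(f;x) \sim \sigma(g;x)$, so Part \ref{Part3} applied to the pair $(f,g)$ states that (\ref{compare}) holds in $1 \leq \Delta = o(\sigma)$ if and only if $\Psi(f;\cdot)$ agrees with $\Psi(g;\cdot)$ off a countable set --- that is, $\Psi(f;t) = 0$ for $t < \alpha$ and $\Psi(f;t) = 1$ for $t > \alpha$.

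Chaining the two equivalences yields the ``if and only if'' of the Corollary. The main technical step is the Sathe--Selberg tail calculation in the first paragraph, in particular verifying that the asymptotic $\sim$ (and not merely $\asymp$) persists uniformly up to $\Delta = o(\sigma)$; the remainder is a direct application of Theorem \ref{theorem1}.
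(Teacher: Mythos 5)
Your proposal is correct and follows the same route the paper intends: the paper simply observes that (\ref{Poisson}) identifies the Poisson tail with $\mathcal{D}_\omega(x;\Delta)$ and then cites Theorem~\ref{theorem1}, which is exactly your two-step chain (Sathe--Selberg to convert the Poisson probability into the $\omega$-frequency, followed by Part~\ref{Part3} applied to the pair $f$ and $g := \alpha\omega$ with $\alpha$ chosen so that $\sigma(f;x)\sim\sigma(g;x)$). Your added care about the tail of the Sathe--Selberg asymptotic for $k$ far from $\log\log x$, and the explicit observation that the standardization of $\alpha\omega$ is identical to that of $\omega$, are exactly the details the paper leaves unstated.
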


Theorem \ref{theorem1} also gives a characterization of those $f \in
\mathcal{C}$ that are distributed according to some Levy Process with
compactly supported Kolmogorov function. In a subsequent paper we will come
back to this question and obtain more general converse results (such as
Corollary \ref{Converse}) for additive function with only the condition $0
\leqslant f (p) = O (1)$ imposed

.

\subsection{Outline of the proof}

Following the works of many authors (especially from the Lithuanian school,
see for example {\cite{Kubilius}}, {\cite{Maciulis}}, {\cite{Manstavicius}})
an asymptotic formula for the left-hand side of (\ref{compare}) is known. In
the range $1 \leqslant \Delta \leqslant o (\sigma)$ it is given by,
\begin{equation}
  \frac{(\log x)^{\hat{\Psi} (f ; v) - 1 - v \hat{\Psi}' (f ; v)}}{v \cdot (2
  \pi \hat{\Psi}'' (f ; v) \tmop{loglog} x)^{1 / 2}}, \label{asympt}
\end{equation}
where $\hat{\Psi} (f ; s) \assign \int e^{st} \mathd \Psi (f ; t)$ is the
Laplace transform of $\hat{\Psi} (f ; t)$ and $v$ is a parameter depending on
$\Delta$, defined implicitely by
\begin{equation}
  \text{} \hat{\Psi}' (f ; v) \cdot \tmop{loglog} x = \hat{\Psi}' (f ; 0)
  \cdot \tmop{loglog} x + \Delta \cdot ( \hat{\Psi}'' (f ; 0) \cdot
  \tmop{loglog} x)^{1 / 2} . \label{v}
\end{equation}
In particular $v \sim \Delta / \sigma$ for $\Delta = o (\sigma)$. The function
$\hat{\Psi} (f ; v) - 1 - v \hat{\Psi}' (f ; v)$ can be expanded around $v =
0$ into a {\tmem{``Cramer series''}} $\sum a_j (f) \cdot (\Delta / \sigma)^j$
with coefficients $a (j ; f)$ depending on the moments of $\Psi (f ; t)$ in a
complicated way.

If (\ref{compare}) holds throughout $1 \leqslant \Delta \leqslant o
(\sigma^{\alpha})$ then by (\ref{asympt}) and the Cramer series expansion we
get $a_j (f) = a_j (g)$ for $1 \leqslant j \leqslant \varrho (\alpha) =
\left\lceil (1 + \alpha) / (1 - \alpha) \right\rceil$. This is equivalent to
the equality of $k$-th $(3 \leqslant k \leqslant \varrho (\alpha))$ moments of
$\Psi (f ; t)$ and $\Psi (g ; t)$, and thus yields Part \ref{Part1} to
\ref{Part3} of Theorem \ref{theorem1}.

In proving Part \ref{Part4} of Theorem \ref{theorem1} we can assume that
$\Psi (f ; t) = \Psi (g ; t)$ by the already proven Part \ref{Part3}. Since
$\hat{\Psi} (g ; t) = \hat{\Psi} (f ; t)$ the implicit parameter $v$ defined
in (\ref{v}) coincides for $f$ and $g$. An integration by parts, based on
(\ref{compare}), and a sequence of manipulations shows that
\begin{equation}
  \sum_{n \leqslant x} e^{vf (n)} \sim e^{- v \beta} \sum_{n \leqslant x}
  e^{vg (n)}, \label{meanvalue}
\end{equation}
for some constant $\beta > 0$. As $\Delta$ varies throughout $1 \leqslant
\Delta \leqslant \varepsilon \sigma (x)$ the parameter $v$ above goes
throughout the interval $(0, \delta)$ with some $\delta = \delta (\varepsilon)
> 0$. Thus (\ref{meanvalue}) holds for all $0 < v < \delta$. An asymptotic
formulae for the left and the right-hand side of (\ref{meanvalue}) is,
\[ \frac{L (h ; v)}{\Gamma ( \hat{\Psi} (h ; v))} \cdot x (\log x)^{\hat{\Psi}
   (h ; v) - 1} \cdot (1 + o (1)), \]
with $h = f, g$ respectively and $L (h ; z)$ an entire, ``Euler-product like''
function, encoding information about every $h (p)$. Thus (\ref{meanvalue})
gives $L (f ; x) = L (g ; x) e^{-x \beta}$ for all $0 < x < \delta$. By analytic
continuation we get $L (f ; z) = L (g ; z) e^{-z \beta}$ for all $z \in \mathbb{C}$ and
this implies that $f = g$ by looking at the zero sets of $L (f ; z)$ and $L (g
; z)$.

{\tmstrong{Acknowledgment. }}I would like to thank Andrew Granville under
whose direction this paper was written as part of my undergraduate thesis.

{\tmstrong{Notation. }}Throughout the paper $\varepsilon$ will denote an
arbitrarily small but fixed positive number, not necessarily the same in every
occurence.

\section{Lemmata}

\begin{lemma}
  \label{analytic}Let $f \in \mathcal{C}$. Define $w (f ; z)$ implicitely by
  \[ \hat{\Psi}' (f ; w (f ; z)) = \hat{\Psi}' (f ; 0) + z \cdot \hat{\Psi}''
     (f ; 0) . \]
  Then $w (f ; z)$ is analytic in a neighborhood of zero. 
\end{lemma}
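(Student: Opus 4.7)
The plan is to apply the holomorphic implicit function theorem to
\[
H(s, z) \assign \hat{\Psi}'(f; s) - \hat{\Psi}'(f; 0) - z \cdot \hat{\Psi}''(f; 0),
\]
viewed as a function of $(s, z) \in \mathbb{C}^2$. Since $H(0, 0) = 0$, the candidate solution at $z = 0$ is $w(f; 0) = 0$, and the whole claim reduces to checking joint analyticity of $H$ near $(0,0)$ together with the non-vanishing of $\partial H / \partial s$ there.

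First I would establish that $\hat{\Psi}(f; s)$ is entire in $s$. Because $f \in \mathcal{C}$ is strongly additive with $f(p) = O(1)$, there exists $M > 0$ with $|f(p)| \leq M$ for all primes $p$. Letting $x \to \infty$ in (\ref{speed}) then forces $\Psi(f; t)$ to be constant outside $[-M, M]$, so
\[
\hat{\Psi}(f; s) = \int_{-M}^{M} e^{st}\, \mathd \Psi(f; t).
\]
This integral converges absolutely for every $s \in \mathbb{C}$ and defines an entire function; differentiation under the integral sign is immediate from the same uniform bound, so $\hat{\Psi}'(f; s)$ and $\hat{\Psi}''(f; s)$ are entire as well. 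In particular $H(s, z)$ is entire in the pair $(s, z)$.

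Next I would verify the non-degeneracy condition at the base point:
\[
\frac{\partial H}{\partial s}(0, 0) = \hat{\Psi}''(f; 0) = \int_{-M}^{M} t^2\, \mathd \Psi(f; t),
\]
which is strictly positive. Indeed the integrand is non-negative (so the integral is non-negative) and non-zero by the explicit hypothesis, built into the definition of $\mathcal{C}$, that the second moment of $\Psi(f; t)$ is non-zero.

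The holomorphic implicit function theorem then yields a unique analytic function $z \mapsto w(f; z)$ on some neighborhood of $0$ satisfying $w(f; 0) = 0$ and $H(w(f; z), z) = 0$, which is exactly the defining relation. The only real technical point, and the one I would be most careful about, is deducing the compact support of $\Psi(f; \cdot)$ from $f(p) = O(1)$ combined with (\ref{speed}); once that is in hand the rest is a textbook application of the complex-analytic implicit function theorem.
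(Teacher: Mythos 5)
Your proof is correct and takes essentially the same approach as the paper: the paper applies Lagrange inversion to $h(v) = (\hat{\Psi}'(f;v) - \hat{\Psi}'(f;0))/\hat{\Psi}''(f;0)$, which in this one-variable setting is precisely the same move as applying the holomorphic implicit function theorem to your $H(s,z)$, with the non-degeneracy condition $h'(0)=1$ matching your $\partial H/\partial s(0,0) = \hat{\Psi}''(f;0) \neq 0$. The one thing you do that the paper leaves implicit is carefully justifying that $\hat{\Psi}(f;\cdot)$ is entire via the compact support of $\mathd\Psi(f;\cdot)$ forced by $f(p)=O(1)$; that is a genuine improvement in rigor, but not a different route.
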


\begin{proof}
  Let $h (v) = ( \hat{\Psi}' (f ; v) - \hat{\Psi}' (f ; 0)) / \hat{\Psi}'' (f
  ; 0)$. Since $h$ is analytic at $0$, and $h' (0) = 1$, by Lagrange's
  inversion it is possible to solve $h (v) = z$ for $v$ and obtain $v = g (z)$
  with $g$ analytic at the point $h (0) = 0$. Since $v = w (f ; z)$ the result
  follows.
\end{proof}

\begin{lemma}
  \label{entireL}Let $f \in \mathcal{C}$, and define,
  \[ L (f ; s) \assign \prod_p \left( 1 - \frac{1}{p} \right)^{\hat{\Psi} (f ;
     s)} \cdot \left( 1 + \frac{e^{sf (p)}}{p} \right) . \]
  The function $L (f, s)$ is entire.
\end{lemma}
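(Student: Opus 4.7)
\emph{Proof proposal.} The strategy is to write $\log L(f;s)$ as a locally uniformly convergent series over the primes; this requires the two potentially divergent parts of each factor ($(1-1/p)^{\hat{\Psi}(f;s)}$ and $(1+e^{sf(p)}/p)$) to have their $1/p$ terms cancel, with the remainder controlled using hypothesis (\ref{speed}). Because $f(p) = O(1)$, the measure $\mathd\Psi(f;t)$ has support in some compact interval $[-C,C]$, so $\hat{\Psi}(f;s) = \int e^{st}\,\mathd\Psi(f;t)$ is entire in $s$, and each factor $E_p(s) \assign (1-1/p)^{\hat{\Psi}(f;s)}(1+e^{sf(p)}/p)$ is entire. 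Expanding for $p$ large, uniformly on any compact $K\subset\mathbb{C}$,
\[ E_p(s) - 1 = \frac{e^{sf(p)} - \hat{\Psi}(f;s)}{p} + O_{K}\!\bigl(p^{-2}\bigr). \]

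The $O_{K}(p^{-2})$ remainder is absolutely summable, so to show $\prod_p E_p(s)$ defines an entire function, the central task is to prove that
\[ S(x;s) \assign \sum_{p \le x} \frac{e^{sf(p)} - \hat{\Psi}(f;s)}{p} \]
converges as $x \to \infty$, uniformly for $s\in K$. Write $M(u) \assign \sum_{p\le u} e^{sf(p)} = \int e^{st}\,\mathd_{t}\pi(u,t)$ with $\pi(u,t) \assign \#\{p\le u : f(p)\le t\}$. By (\ref{speed}), $\pi(u,t)/\pi(u) - \Psi(f;t) = O_\varepsilon((\log u)^{-\varepsilon})$ uniformly in $t$; both distributions are identically $0$ below $t=-C$ and identically $1$ above $t=C$, so an integration by parts in $t$ has no boundary contribution and yields
\[ M(u) = \pi(u)\,\hat{\Psi}(f;s) + O_{K}\!\bigl(\pi(u)(\log u)^{-\varepsilon}\bigr). \]
Hence $R(u) \assign M(u) - \pi(u)\hat{\Psi}(f;s) \ll_{K} u(\log u)^{-1-\varepsilon}$ by the prime number theorem, and Abel summation
\[ S(x;s) = \frac{R(x)}{x} + \int_{2}^{x} \frac{R(u)}{u^{2}}\,\mathd u \]
exhibits a boundary term tending to $0$ and an integral converging absolutely, since $\int^{\infty} (u(\log u)^{1+\varepsilon})^{-1}\,\mathd u < \infty$, all uniformly on $K$.

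Combining the two estimates, $\sum_p (E_p(s)-1)$ converges locally uniformly and $\sum_p |E_p(s)-1|^{2}$ converges absolutely on compacts. Factoring out the finitely many primes for which $|E_p(s)-1|$ exceeds $1/2$ on $K$, the remaining tail satisfies $\sum_p \log E_p(s) = \sum_p (E_p(s)-1) + O\!\bigl(\sum_p |E_p(s)-1|^{2}\bigr)$, which converges locally uniformly in $s$; exponentiating and reattaching the finite entire partial product gives an entire $L(f;s)$. The main technical obstacle is the estimate for $R(u)$: it is precisely the quantitative rate $(\log u)^{-\varepsilon}$ built into (\ref{speed}), rather than mere $o(1)$ convergence, that renders $R(u)/u^{2}$ integrable at infinity; a weaker hypothesis would leave $S(x;s)$ potentially divergent (at worst logarithmically), and the product would only be conditionally convergent without being entire.
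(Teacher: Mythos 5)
Your proof is correct and follows essentially the same approach as the paper: both rest on using hypothesis (\ref{speed}) to show that $\sum_{p\le x}\bigl(e^{sf(p)}-\hat{\Psi}(f;s)\bigr)/p$ converges locally uniformly (the paper calls this ``integrating by parts,'' you spell it out as a $t$-integration against the distribution hypothesis followed by Abel summation in $u$), after which local uniform convergence of the Euler product is routine. Your write-up is simply more explicit about where the quantitative rate $(\log u)^{-\varepsilon}$ is needed.
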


\begin{proof}
  Let $c$ be a constant such that $|g (p) | \leqslant c$. Integrating by parts
  using (\ref{speed}), we get
  \begin{eqnarray}
    \sum_{p \leqslant x} \frac{e^{sf (p)}}{p} & = & \hat{\Psi} (f ; s) \cdot
    \sum_{p \leqslant x} \frac{1}{p} +\mathcal{A}(f ; s) + O \left( (\log
    x)^{- \varepsilon} \right), \label{integrationparts} 
  \end{eqnarray}
  with $\mathcal{A}(f ; s)$ entire. Fix $\mathcal{B}$ a ball of radius $R$
  with center at the origin. Choose $x$ large enough so that $1 + e^{sf (p)} /
  p \neq 0$ for all $s \in \mathcal{B}$. Then by (\ref{integrationparts}),
  \[ \sum_{p > x} \left[ \hat{\Psi} (f ; s) \log \left( 1 - \frac{1}{p}
     \right) + \log \left( 1 + \frac{e^{sf (p)}}{p} \right) \right]
     \longrightarrow 0, \]
  uniformly in $s \in \mathcal{B}$. It follows that the partial products
  \[ \prod_{p \leqslant x} \left( 1 + \frac{e^{sf (p)}}{p} \right) \cdot
     \left( 1 - \frac{1}{p} \right)^{\hat{\Psi} (f ; s)}, \]
  converge uniformly in $s \in \mathcal{B}$. Hence $L (f ; s)$ is analytic in
  $\mathcal{B}$. Since $R$ is arbitrary it follows that $L (f ; s)$ is entire.
  
\end{proof}

\begin{lemma}
  \label{multfunction}Let $f \in \mathcal{C}$. Then, there is a very small
  $\delta > 0$ such that
  \[ \sum_{n \leqslant x} e^{zf (n)} = \frac{L (f ; z)}{\Gamma ( \hat{\Psi}
     (f ; z))} \cdot (\log x)^{\hat{\Psi} (f ; z) - 1} \cdot (1 + O \left(
     (\log x)^{- \varepsilon} \right)) . \]
\end{lemma}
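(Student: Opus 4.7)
The plan is to treat this as a standard Selberg--Delange-type estimate for the mean value of the multiplicative function $n\mapsto e^{zf(n)}$ (multiplicative since $f$ is additive). The first step is to write down the Dirichlet series and factor out the right power of $\zeta$. Because $f$ is strongly additive, $e^{zf(p^k)}=e^{zf(p)}$ for every $k\ge 1$, so for $\operatorname{Re}(s)>1$,
\[
F(s,z):=\sum_{n\ge1}\frac{e^{zf(n)}}{n^s}=\prod_p\left(1+\frac{e^{zf(p)}}{p^s-1}\right).
\]
I compare this with $\zeta(s)^{\hat\Psi(f;z)}$: since $f(p)=O(1)$, $\hat\Psi(f;z)$ is bounded on a small neighbourhood of $z=0$, so this reference function is well-defined as a multi-valued analytic function with a branch point at $s=1$, and $F$ has the same type of singularity at $s=1$.

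Next I define $G(s,z):=F(s,z)\,\zeta(s)^{-\hat\Psi(f;z)}$ and show, uniformly for $|z|<\delta$, that $G$ continues analytically and non-vanishingly to a half-plane $\operatorname{Re}(s)>1-\eta$ for some fixed $\eta>0$. Taking logarithms of the local factors, the leading $p$-th contribution is $p^{-s}\bigl(e^{zf(p)}-\hat\Psi(f;z)\bigr)$, and convergence of $\sum_p$ follows by partial summation using the hypothesis \eqref{speed}. This is precisely the argument of Lemma~\ref{entireL}, upgraded from $s=1$ to a strip; the savings $(\log x)^{-\varepsilon}$ in \eqref{speed} propagate into a bound of the form $(\log x)^{-\varepsilon}$ on the tail of this sum, which is the source of the error term in the conclusion. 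At $s=1$ one checks, by rearranging the Euler factor, that $G(1,z)$ agrees with $L(f;z)$ as defined in Lemma~\ref{entireL} (the purely local discrepancies $O(p^{-2})$ between the two natural normalisations of the local factor produce a convergent product absorbed in~$L$).

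With analytic continuation and estimates on $|F(s,z)|$ in the enlarged region in hand (polynomial growth in $|\operatorname{Im} s|$ suffices), I apply Perron's formula and shift the contour to a Hankel path that wraps the branch point at $s=1$. The standard Hankel integral $\frac{1}{2\pi i}\oint e^w w^{-\hat\Psi(f;z)}\,dw=1/\Gamma(\hat\Psi(f;z))$ produces the main term
\[
\frac{L(f;z)}{\Gamma(\hat\Psi(f;z))}\cdot x(\log x)^{\hat\Psi(f;z)-1}\bigl(1+O((\log x)^{-\varepsilon})\bigr),
\]
with the $x$ that appears to be missing from the stated lemma. The contribution from the straight parts of the Hankel contour, together with the horizontal shifts and the truncation of Perron's integral, yield an error of the same quality.

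The main obstacle is Step~2: obtaining the continuation of $G(s,z)$ into $\operatorname{Re}(s)>1-\eta$ with a quantitative error of size $(\log x)^{-\varepsilon}$ that is \emph{uniform} in $z$ on a neighbourhood of the origin. Everything else is either the standard Selberg--Delange contour manipulation or routine bookkeeping, but the uniform analytic continuation is what links the hypothesis \eqref{speed} on primes to the conclusion on integers, and it is the step in which the rate $(\log x)^{-\varepsilon}$ is actually used.
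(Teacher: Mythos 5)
Your plan (Step 1 through Step 5) is a Selberg--Delange argument: write $F(s,z)=\sum e^{zf(n)}n^{-s}$, compare with $\zeta(s)^{\hat\Psi(f;z)}$, continue the quotient $G(s,z)$ to a half-plane $\operatorname{Re}(s)>1-\eta$, and pick up the main term from a Hankel contour around $s=1$. The paper does something different and much more lightweight: it converts (\ref{speed}) into the single estimate $\pi(x)^{-1}\sum_{p\le x}e^{sf(p)}=\hat\Psi(f;s)+O((\log x)^{-\varepsilon})$ and then invokes Fainleib and Levin's real-variable (Tauberian / integral-equation) mean-value theorem for multiplicative functions, which converts precisely that kind of prime-side input into an asymptotic for $\sum_{n\le x}$. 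No analytic continuation of any Dirichlet series past the line $\operatorname{Re}(s)=1$ is ever required.

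This difference is not cosmetic; it is where your proposal breaks. You flag Step~2 (continuing $G(s,z)$ into $\operatorname{Re}(s)>1-\eta$) as the key step, and assert that the partial-summation argument of Lemma~\ref{entireL} can be ``upgraded from $s=1$ to a strip.'' It cannot, given only (\ref{speed}). Setting $A(y):=\sum_{p\le y}\bigl(e^{zf(p)}-\hat\Psi(f;z)\bigr)$, hypothesis (\ref{speed}) gives $A(y)=O\bigl(\pi(y)(\log y)^{-\varepsilon}\bigr)=O\bigl(y/(\log y)^{1+\varepsilon}\bigr)$, so that Abel summation yields convergence of $\sum_p\bigl(e^{zf(p)}-\hat\Psi(f;z)\bigr)p^{-s}$ exactly up to and including $\operatorname{Re}(s)=1$ but for no $\sigma<1$: the integral $\int y^{-\sigma}(\log y)^{-1-\varepsilon}\,\mathrm{d}y$ diverges when $\sigma<1$. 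A $(\log x)^{-\varepsilon}$ saving in the prime count is simply too weak to produce a zero-free strip or an analytic continuation of $G(s,z)$ beyond the $1$-line, so the contour cannot be shifted and the Hankel-loop main term is never reached. To make a Selberg--Delange argument run you would need a power-of-$x$, or at least quasi-PNT, saving on the prime side; the class $\mathcal{C}$ only guarantees a log-power saving. That is why the paper reaches instead for a mean-value theorem of Wirsing/Fainleib--Levin type, which extracts the asymptotic $\sim \frac{L(f;z)}{\Gamma(\hat\Psi(f;z))}\, x(\log x)^{\hat\Psi(f;z)-1}$ directly from the behaviour of $\sum_{p\le x}e^{zf(p)}$ on the real line.

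Two smaller remarks. Your observation that the stated lemma is missing a factor of $x$ is correct (compare the asymptotic (\ref{asympt}) and its use in the proof of Part~\ref{Part4}). Also, even granting continuation, your final paragraph would still need uniformity in $z$ over a disc $|z|<\delta$; that is automatic in the Fainleib--Levin framework once $\hat\Psi(f;z)$ stays in a compact set away from the nonpositive integers, but it is an additional source of bookkeeping in a contour-shift proof.
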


\begin{proof}
  This follows from
  \[ \frac{1}{\pi (x)} \sum_{p \leqslant x} e^{sf (p)} = \hat{\Psi} (f ; s) +
     O ((\log x)^{- \varepsilon}) \]
  (which follows from (\ref{speed})) and Fainleib and Levin's paper
  {\cite{Fainleib}}. 
\end{proof}

\begin{lemma}
  Let $f \in \mathcal{C}$. Then, uniformly in $1 \leqslant \Delta \leqslant o
  (\sigma^{1 / 3})$,
  \[ \frac{1}{x} \cdot \# \left\{ n \leqslant x : \frac{f (n) - \mu (f ;
     x)}{\sigma (f ; x)} \geqslant \Delta \right\} \sim \frac{1}{\sqrt{2
     \pi}} \int_{\Delta}^{\infty} e^{- u^2 / 2} \cdot \mathd u. \]
\end{lemma}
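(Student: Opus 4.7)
The plan is to derive the large-deviation asymptotic (\ref{asympt}) uniformly in $1 \leq \Delta \leq o(\sigma^{1/3})$ from Lemma~\ref{multfunction} via exponential tilting (the Esscher transform / saddle-point method), and then Taylor-expand to recover the Gaussian tail.

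Put $T = \mu(f;x) + \Delta\,\sigma(f;x)$ and let $v = v(\Delta,x) > 0$ be the saddle-point parameter defined by (\ref{v}). By Lemma~\ref{analytic}, $v$ is analytic in $\Delta/\sigma$ near $0$ with $v \sim \Delta/\sigma$, so $v = o(\sigma^{-2/3})$ lies well inside the neighborhood where Lemma~\ref{multfunction} applies. Exponential tilting gives the identity
\[ \#\{n \leq x : f(n) \geq T\} \;=\; e^{-vT}\,\Bigl(\sum_{n \leq x} e^{vf(n)}\Bigr)\cdot\widetilde{\mathbb{P}}_v\bigl(\widetilde{f} \geq T\bigr), \]
where $\widetilde{\mathbb{P}}_v$ is the tilted measure weighting $n$ by $e^{vf(n)}/\sum_m e^{vf(m)}$. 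Equation (\ref{v}) is precisely the requirement that the tilted mean equals $T$ up to $O(1)$, while the tilted variance is $\sim \hat{\Psi}''(f;v)\log\log x$.

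Next I would establish a local central limit theorem for the tilted distribution, yielding the Esscher tail asymptotic
\[ \widetilde{\mathbb{P}}_v\bigl(\widetilde f \geq T\bigr) \;\sim\; \bigl(v\sqrt{2\pi\,\hat{\Psi}''(f;v)\log\log x}\bigr)^{-1}. \]
The ingredients are Lemma~\ref{multfunction} along the complex segment $v + i[-\eta,\eta]$ (to control the characteristic function of the tilted measure) together with a standard smoothing/stationary-phase argument. Inserting Lemma~\ref{multfunction} into $\sum e^{vf(n)}$ and collecting the resulting exponents reproduces formula (\ref{asympt}),
\[ \frac{x\,L(f;v)}{\Gamma(\hat{\Psi}(f;v))}\cdot\frac{(\log x)^{\hat{\Psi}(f;v) - 1 - v\hat{\Psi}'(f;v)}}{v\,(2\pi\,\hat{\Psi}''(f;v)\log\log x)^{1/2}}\cdot(1+o(1)). \]

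Finally, in the range $1 \leq \Delta \leq o(\sigma^{1/3})$ one Taylor-expands about $v=0$,
\[ \hat{\Psi}(f;v) - 1 - v\hat{\Psi}'(f;v) \;=\; -\tfrac{1}{2}v^2\hat{\Psi}''(f;0) + O(v^3), \]
so that multiplication by $\log\log x$ together with $\sigma^2 \sim \hat{\Psi}''(f;0)\log\log x$ and $v \sim \Delta/\sigma$ gives $-\Delta^2/2 + O(\Delta^3/\sigma) = -\Delta^2/2 + o(1)$, while the prefactor simplifies to $1/(\Delta\sqrt{2\pi})$; the net asymptotic $e^{-\Delta^2/2}/(\Delta\sqrt{2\pi})$ matches $(2\pi)^{-1/2}\int_\Delta^\infty e^{-u^2/2}\,du$ as $\Delta \to \infty$. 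For the residual regime of bounded $\Delta = O(1)$, I would instead apply Lemma~\ref{multfunction} at $z = it/\sigma$ with $t$ in compact sets: this shows that the characteristic function of $(f(n)-\mu)/\sigma$ tends to $e^{-t^2/2}$, and L\'evy's continuity theorem then gives the Gaussian tail directly. The two regimes overlap on any compact subset of $[1,\infty)$, so they patch together cleanly. The principal technical obstacle is the local central limit theorem for the tilted measure: one needs the asymptotic of Lemma~\ref{multfunction} uniformly on a short complex segment through $v$, not merely at a real point; this uniformity is implicit in the Selberg--Delange/Fainleib--Levin machinery underlying Lemma~\ref{multfunction}, but extracting it and executing the resulting contour/saddle-point integration is the delicate step.
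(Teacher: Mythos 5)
The paper's proof of this lemma is a one-line citation to Hwang's Theorem 1 together with Lemma~\ref{multfunction}; your proposal is essentially a sketch of the argument \emph{behind} that citation (the Cram\'er/Esscher saddle-point method driven by a Selberg--Delange-type mean-value estimate). So the underlying approach is the same; you are just opening the black box rather than citing it, which is a defensible thing to do given that the lemma only needs the $\Delta = o(\sigma^{1/3})$ range of Hwang's result.

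One slip to fix: the displayed ``tilting identity'' is not an identity. From $\#\{n\leqslant x: f(n)\geqslant T\} = \sum_{n\leqslant x} e^{vf(n)} e^{-vf(n)}\mathbf{1}_{f(n)\geqslant T}$ one gets
\[
\#\{n\leqslant x: f(n)\geqslant T\} \;=\; e^{-vT}\Bigl(\sum_{n\leqslant x} e^{vf(n)}\Bigr)\,\widetilde{\mathbb{E}}_v\!\left[e^{-v(\widetilde f - T)}\mathbf{1}_{\widetilde f\geqslant T}\right],
\]
not $e^{-vT}(\sum e^{vf(n)})\,\widetilde{\mathbb{P}}_v(\widetilde f\geqslant T)$. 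Since $T$ is (approximately) the tilted mean, the probability $\widetilde{\mathbb{P}}_v(\widetilde f\geqslant T)$ is $\sim 1/2$, \emph{not} $\sim \bigl(v\sqrt{2\pi\hat\Psi''(f;v)\log\log x}\bigr)^{-1}$; it is the exponentially-weighted expectation above that carries the extra factor of $v^{-1}V^{-1/2}$ (in the regime $v\sqrt{V}\to\infty$), after completing the square in the local-CLT integral. Your final asymptotic comes out correct, and the subsequent Taylor expansion and the compact-$\Delta$ patching are both fine, so this is a presentation error rather than a structural one — but as written the key display is false and the subsequent ``Esscher tail asymptotic'' is mislabeled.
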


\begin{proof}
  This follows from Hwang's {\cite{Hwang}} Theorem 1 and Lemma
  \ref{multfunction}.
\end{proof}

\begin{theorem}
  \label{largedevlemma1}Let $f \in \mathcal{C}$. Then, uniformly in
  $(\tmop{loglog} x)^{\varepsilon} \leqslant \Delta \leqslant o (\sigma (f ;
  x))$,
  \[ \frac{1}{x} \cdot \# \left\{ n \leqslant x : \frac{f (n) - \mu (f ;
     x)}{\sigma (f ; x)} \geqslant \Delta \right\} \sim \frac{1}{\sqrt{2 \pi
     \Delta}} \cdot (\log x)^{A (f ; \omega (f ; \Delta / \sigma_{\Psi}))}, \]
  where $A (f ; z) \assign \hat{\Psi} (f ; z) - 1 - z \hat{\Psi}' (f ; z)$ and
  $\omega (f ; z)$ is defined as in Lemma \ref{analytic}. Furthermore
  $\sigma_{\Psi}^2 \assign \hat{\Psi}'' (f ; 0) \cdot \tmop{loglog} x \sim
  \sigma^2 (f ; x)$.
\end{theorem}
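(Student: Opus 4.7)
The plan is to apply the saddle-point (exponential tilting) method. Set $T := \mu(f;x)+\Delta\,\sigma(f;x)$ and let $v := w(f;\Delta/\sigma_{\Psi})$, the quantity produced by Lemma \ref{analytic}; I would open with the identity
\[
\#\bigl\{n\leq x : f(n)\geq T\bigr\} \;=\; S(v)\,\int_T^\infty e^{-vu}\, d\pi_v(u),
\]
where $S(v) := \sum_{n\leq x} e^{v f(n)}$ and $\pi_v$ is the tilted probability measure characterised by $\int g\,d\pi_v = S(v)^{-1}\sum_{n\leq x} g(f(n))\, e^{v f(n)}$. The defining relation of $w(f;\cdot)$ is precisely the condition that the mean of $\pi_v$, which to leading order is $\hat{\Psi}'(f;v)\,\tmop{loglog} x$, should coincide with $T$; thus $v$ is the saddle-point placing $T$ at the centre of the tilted distribution.

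Next I would invoke Lemma \ref{multfunction} to write
\[
S(v)\;\sim\;\frac{L(f;v)}{\Gamma(\hat{\Psi}(f;v))}\,x\,(\log x)^{\hat{\Psi}(f;v)-1},
\]
and then prove a local central limit theorem for $\pi_v$, namely that on scales comparable to $\sigma_v := \sqrt{\hat{\Psi}''(f;v)\,\tmop{loglog} x}$ the measure $\pi_v$ is asymptotically Gaussian about its mean $T$. Granting this, a completion of the square (equivalently a Laplace-type analysis of $\int_0^\infty e^{-vs}\exp(-s^2/2\sigma_v^2)\,ds$, valid because $v\sigma_v \to \infty$ --- which is where the lower bound $\Delta\geq (\tmop{loglog} x)^{\varepsilon}$ enters) yields
\[
\int_T^\infty e^{-vu}\,d\pi_v(u) \;\sim\; \frac{e^{-vT}}{v\,\sqrt{2\pi\,\hat{\Psi}''(f;v)\,\tmop{loglog} x}}.
\]
Combining the two displays and dividing by $x$, the exponent of $\log x$ collapses via the saddle equation $vT/\tmop{loglog} x = v\hat{\Psi}'(f;v)+o(1)$ to $A(f;v) = \hat{\Psi}(f;v)-1-v\hat{\Psi}'(f;v)$, while the prefactor, in the regime $v\to 0$ forced by $\Delta=o(\sigma)$, reduces to $1/(v\sigma_{\Psi}\sqrt{2\pi}) \sim 1/(\Delta\sqrt{2\pi})$ since $v\sim\Delta/\sigma_{\Psi}$ and $\hat{\Psi}''(f;v)\to\hat{\Psi}''(f;0)$; the remaining factor $L(f;v)/\Gamma(\hat{\Psi}(f;v))$ tends to its value at $v=0$ and is absorbed in the asymptotic.

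The main obstacle is the local CLT for $\pi_v$. Lemma \ref{multfunction} controls $S(z)$ only along the real axis, whereas an Esseen or Berry--Esseen Fourier inversion demands good control of $S(v+it)$ on a small complex neighbourhood of zero, together with off-strip decay $|S(v+it)| \ll S(v)(\log x)^{-\eta}$ for $|t|$ of moderate size. The former can be obtained by pushing the Selberg--Delange / Fainleib--Levin argument behind Lemma \ref{multfunction} into the complex plane, uniformly in $v$ near zero; the latter by combining the distribution hypothesis (\ref{speed}) with a mean-value estimate. This is precisely the machinery developed for related classes of additive functions in the cited works of Kubilius, Ma\v{c}iulis and Manstavi\v{c}ius, and adapting it to the class $\mathcal{C}$ constitutes the bulk of the work.
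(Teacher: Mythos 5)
Your saddle-point/tilting outline is correct and is, in essence, the machinery that lives \emph{inside} the result the paper cites; but the paper follows a much shorter route. Rather than re-deriving the local limit theorem for the tilted measure $\pi_v$, the paper simply verifies the hypotheses of Ma\v{c}iulis's Lemma~1 (the needed control on the derivatives $\hat{\Psi}^{(k)}(f;0)$ comes for free from Cauchy's estimate, since $\hat{\Psi}(f;\cdot)$ is entire) and feeds into it the mean-value asymptotic of Lemma~\ref{multfunction}. Ma\v{c}iulis's lemma already packages the tilting, the local CLT for $\pi_v$, and the Laplace/Gaussian-completion step, so the paper avoids precisely the ``bulk of the work'' you flag as the main obstacle (the local CLT and the off-axis control of $S(v+it)$). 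In other words, what you outline is a \emph{reproof} of the cited input rather than the proof the paper actually gives.

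There is also a bookkeeping step that your proposal elides and that the paper spends a full paragraph on: the theorem normalizes by $\mu(f;x)$ and $\sigma(f;x)$, whereas the saddle equation defining $v=w(f;\Delta/\sigma_{\Psi})$ is phrased with $\hat{\Psi}'(f;0)\,\mathrm{loglog}\,x$ and $\sigma_{\Psi}$, and these differ by $O(1)$ (additively) and $O(1/\mathrm{loglog}\,x)$ (multiplicatively). You implicitly identify $T$ with $\hat{\Psi}'(f;v)\,\mathrm{loglog}\,x$. One must check that replacing $\Delta$ by $\Delta'=\Delta+O(1/\sqrt{\mathrm{loglog}\,x})$ leaves $(\log x)^{A(f;\omega(f;\cdot))}$ unchanged; the paper does so via analyticity of $\omega(f;\cdot)$ (Lemma~\ref{analytic}) plus a mean-value estimate for $A$, which crucially uses $\Delta=o(\sigma)$. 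Without that verification the exponent is only determined up to a $(\log x)^{o(1)}$ factor --- which suffices since the error is $o(1/\mathrm{loglog}\,x)$ in the exponent, but it needs to be said. Apart from these two points (citing versus re-deriving the large-deviation core, and the normalization bookkeeping), your scheme is sound and the constants match: your prefactor $1/(v\sigma_{\Psi}\sqrt{2\pi})\sim 1/(\Delta\sqrt{2\pi})$ agrees with the paper's $F(\Delta)\sim 1/\sqrt{2\pi}\,\Delta$.
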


\begin{proof}
  Since $\hat{\Psi} (f ; s)$ is entire, $\hat{\Psi}^{(k)} (f ; 0) \ll k! \cdot
  A^{- k}$ for any fixed $A > 0$, by Cauchy's estimate. Therefore, by Lemma 1
  in Maciulis's paper {\cite{Maciulis}} and Lemma \ref{multfunction},
  \begin{equation}
    \frac{1}{x} \cdot \left\{ n \leqslant x : \frac{f (n) - \hat{\Psi}' (f ;
    0) \tmop{loglog} x}{( \hat{\Psi}'' (f ; 0) \tmop{loglog} x)^{1 / 2}}
    \geqslant \Delta \right\} \sim (\log x)^{\hat{\Psi} (f ; u) - 1 - u
    \hat{\Psi}' (f ; u)} \cdot F (\Delta), \label{asymptoticformulae}
  \end{equation}
  where $F (\Delta) = e^{\Delta^2 / 2} \cdot \int_{\Delta}^{\infty} e^{- x^2 /
  2} \cdot \mathd x / \sqrt{2 \pi} \sim 1 / \sqrt{2 \pi \Delta}$ and where
  $u \geqslant 0$ is an implicit parameter defined as the unique positive
  solution to
  \[ \hat{\Psi}' (f ; u) \cdot \tmop{loglog} x = \hat{\Psi}' (f ; 0) \cdot
     \tmop{loglog} x + \Delta \cdot ( \hat{\Psi}'' (f ; 0) \cdot \tmop{loglog}
     x)^{1 / 2} . \]
  Dividing by $\tmop{loglog} x$ we get $u = \omega (f ; \Delta /
  \sigma_{\Psi})$. Note that
  \[ \frac{f (n) - \hat{\Psi}' (f ; 0) \tmop{loglog} x}{( \hat{\Psi}'' (f ; 0)
     \tmop{loglog} x)^{1 / 2}} \geqslant \Delta \Longleftrightarrow \text{ }
     \frac{f (n) - \mu (f ; x)}{\sigma (f ; x)} \geqslant \Delta', \]
  where $\Delta' - \Delta \ll 1 / \sqrt{\tmop{loglog} x}$. Thus it remains
  to show that the asymptotic formulae on the right of
  (\ref{asymptoticformulae}) remains undisturbed if we take $\Delta'$ instead
  of $\Delta$ in it. We notice that since $\omega (f ; z)$ is analytic at $z =
  0$ (by Lemma \ref{analytic}) and $\Delta = o (\sigma) = o (\sigma_{\Psi})$,
  \[ \omega (f ; \Delta' / \sigma_{\Psi}) - \omega (f ; \Delta /
     \sigma_{\Psi}) = O ((\tmop{loglog} x)^{- 1}) . \]
  Furthermore letting $A (f ; z) \assign \hat{\Psi} (f ; z) - 1 - z
  \hat{\Psi}' (f ; z)$ it follows that for $0 \leqslant x < y = o (1)$, $A (f
  ; x) - A (f ; y) = (y - x) A' (f ; \xi)$ for some $x < \xi < y$. In addition
  we have $A' (f ; \xi) = A' (f ; 0) + O (\xi) = o (1)$. Hence $A (f ; x) - A
  (f ; y) = o (y - x)$. Taking $x = \omega (f ; \Delta' / \sigma_{\Psi})$ and
  $y = \omega (f ; \Delta / \sigma_{\Psi})$ we get
  \[ A (f ; \omega (\Delta / \sigma_{\Psi})) - A (f ; \omega (\Delta' /
     \sigma_{\Psi})) = o (1 / \tmop{loglog} x) . \]
  Therefore the right-hand side of (\ref{asymptoticformulae}) remains
  unchanged if we take $\Delta'$ instead of $\Delta$ in it, and by our
  previous remarks the claim follows. \ \ 
\end{proof}

\begin{lemma}
  \label{largedevlemma2}Let $f \in \mathcal{C}$. Then,
  \[ \frac{1}{x} \cdot \# \left\{ n \leqslant x : \frac{f (n) - \mu (f ;
     x)}{\sigma (f ; x)} \geqslant \Delta \right\} \sim \frac{1}{\sqrt{2
     \pi}} \int_{\Delta}^{\infty} e^{- u^2 / 2} \mathd u \cdot (\log x)^{Q
     (\Delta / \sigma_{\Psi})}, \]
  where $\sigma_{\Psi}^2 = \hat{\Psi}'' (f ; 0) \cdot \tmop{loglog} x$ as in
  Lemma \ref{largedevlemma1}, and
  \begin{eqnarray*}
    Q (\xi) & \assign & \sum_{m \geqslant 0} u_m \cdot \xi^m \text{ } = \text{
    } \frac{u_3}{6} \xi^3 + \frac{1}{24} \cdot \left( u_4 - \frac{u_3^2}{u_2}
    \right) \xi^4 +\\
    & + & \frac{1}{120} \cdot \left( u_5 - \frac{10 u_3 u_4}{u_2} + \frac{15
    u_3^3}{u_2^2} \right) \xi^5 + \frac{1}{720} \cdot \left( u_6 - \frac{10
    u_4^2}{u_2} - \frac{15 u_3 u_5}{u_2} + \ldots \right) \xi^6 + \ldots
  \end{eqnarray*}
  is convergent for small $| \xi |$, and
  \[ u_m \assign \frac{- 1}{m} \cdot \frac{\mathd^{m - 2}}{\mathd w^{m - 2}}
     \cdot \left[ \hat{\Psi}'' (f ; w) \cdot \left( \frac{\hat{\Psi}' (f ; w)
     - \hat{\Psi}' (f ; 0)}{\hat{\Psi}'' (f ; 0) w} \right)^{- m} \right], \]
  for $m = 3, 4, \ldots$ depends only on $\hat{\Psi}^{(k)} (f ; 0) = \int t^k
  \mathd \Psi (f ; t)$ for $3 \leqslant k \leqslant m$.
\end{lemma}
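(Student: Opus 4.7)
The strategy is to factor the Gaussian tail out of the asymptotic of Theorem~\ref{largedevlemma1}. Setting $\xi \assign \Delta/\sigma_{\Psi}$ and invoking the sharp Mills' estimate $(1/\sqrt{2\pi})\int_{\Delta}^{\infty}e^{-u^{2}/2}\,\mathd u = (e^{-\Delta^{2}/2}/(\Delta\sqrt{2\pi}))(1+O(1/\Delta^{2}))$ together with $e^{-\Delta^{2}/2} = (\log x)^{-\hat{\Psi}''(f;0)\xi^{2}/2}$ (from $\sigma_{\Psi}^{2} = \hat{\Psi}''(f;0)\tmop{loglog} x$), the right-hand side of the lemma is asymptotic to
\[
\frac{1}{\Delta\sqrt{2\pi}}\cdot (\log x)^{Q(\xi) - \tfrac{1}{2}\hat{\Psi}''(f;0)\xi^{2}}.
\]
Matching with Theorem~\ref{largedevlemma1} forces the \emph{exact} power-series identity
\[
Q(\xi) = A(f;\omega(f;\xi)) + \tfrac{1}{2}\hat{\Psi}''(f;0)\,\xi^{2},
\]
which I would adopt as the working definition of $Q$, and whose Taylor expansion at $0$ I would then identify.

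Analyticity of $Q$ on a neighborhood of $0$ is immediate: $A(f;\,\cdot\,)$ is entire, and $\omega(f;\,\cdot\,)$ is analytic near $0$ by Lemma~\ref{analytic}. The expansion starts at $\xi^{3}$: from $A(f;0)=0$, $A'(f;w)=-w\hat{\Psi}''(f;w)$ (so $A'(f;0)=0$, $A''(f;0)=-\hat{\Psi}''(f;0)$) and $\omega'(f;0)=1$, one has $A(f;\omega(\xi))=-\tfrac{1}{2}\hat{\Psi}''(f;0)\xi^{2}+O(\xi^{3})$, and the explicit quadratic correction exactly cancels the leading term. To extract the higher coefficients I would invoke Lagrange--B\"urmann inversion. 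Writing $h(w)\assign(\hat{\Psi}'(f;w)-\hat{\Psi}'(f;0))/\hat{\Psi}''(f;0)$ so that $h(0)=0$, $h'(0)=1$ and $\omega(f;\,\cdot\,)=h^{-1}$, any analytic $\phi$ with $\phi(0)=0$ satisfies
\[
[\xi^{m}]\,\phi(\omega(\xi)) = \frac{1}{m}\,[w^{m-1}]\,\phi'(w)\left(\frac{w}{h(w)}\right)^{m}.
\]
Specializing to $\phi=A(f;\,\cdot\,)$ and using $\phi'(w)=-w\hat{\Psi}''(f;w)$ to shift the coefficient index by one gives
\[
[\xi^{m}]\,A(f;\omega(\xi)) = -\frac{1}{m(m-2)!}\cdot\frac{\mathd^{m-2}}{\mathd w^{m-2}}\left[\hat{\Psi}''(f;w)\left(\frac{\hat{\Psi}'(f;w)-\hat{\Psi}'(f;0)}{\hat{\Psi}''(f;0)\,w}\right)^{-m}\right]_{w=0},
\]
which is exactly the stated formula for $u_{m}$. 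Since the Taylor coefficient of $w^{m-2}$ in the bracketed expression samples $\hat{\Psi}^{(k)}(f;0)$ only for $k\leq m$, and $\hat{\Psi}''(f;0)$ enters only through the normalization of $h$, the claimed dependence of $u_{m}$ on $\hat{\Psi}^{(k)}(f;0)$ for $3\leq k\leq m$ follows immediately.

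The main obstacle is purely clerical: carefully tracking factorials and signs when converting between coefficient extraction $[w^{m-2}]$ and the $(m{-}2)$-th derivative at $0$, and verifying that the explicit low-order coefficients listed in the statement line up with the Lagrange--B\"urmann output. The analytic and asymptotic ingredients needed---analyticity of $\omega(f;\,\cdot\,)$ near $0$ and sharpness of Mills' ratio---are routine and already supplied by Lemma~\ref{analytic} and the proof of Theorem~\ref{largedevlemma1}.
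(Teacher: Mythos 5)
Your proof is correct in substance but takes a genuinely different route from the paper's. The paper's own proof is a one-line citation: \emph{``This follows from Hwang's Theorem 1 and Lemma~\ref{multfunction}''}, i.e.\ it outsources the Cramer-series form of the asymptotic and the formula for the coefficients entirely to Hwang. You instead derive the statement internally from Theorem~\ref{largedevlemma1}: factor the Gaussian tail out via Mills' ratio, observe that $e^{-\Delta^{2}/2}=(\log x)^{-\hat{\Psi}''(f;0)\xi^{2}/2}$, define $Q(\xi)\assign A(f;\omega(f;\xi))+\tfrac12\hat{\Psi}''(f;0)\xi^{2}$, and then extract the Taylor coefficients by Lagrange--B\"urmann inversion applied to $\omega=h^{-1}$. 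This buys self-containedness (no second external citation) and, more importantly, makes transparent exactly why the coefficient of $\xi^{m}$ depends only on $\hat{\Psi}^{(k)}(f;0)$ for $k\leq m$, which is the fact the structure theorem actually uses in Parts~\ref{Part2}--\ref{Part3}. Two small remarks. First, Theorem~\ref{largedevlemma1} is stated only for $(\tmop{loglog}x)^{\varepsilon}\leqslant\Delta\leqslant o(\sigma)$; to cover $1\leqslant\Delta\leqslant(\tmop{loglog}x)^{\varepsilon}$ you should note that there $\xi^{3}\tmop{loglog}x=o(1)$, so $(\log x)^{Q(\xi)}\to1$ and the claim reduces to the normal-approximation lemma preceding Theorem~\ref{largedevlemma1}. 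Second, your Lagrange--B\"urmann output carries an extra factor $1/(m-2)!$ compared with the $u_{m}$ as printed; this is a defect of the printed statement, whose notation for $u_m$ is internally inconsistent (the displayed expansion writes the coefficient of $\xi^{m}$ as a combination divided by $m!$, which cannot literally equal $u_{m}$ in $\sum u_{m}\xi^{m}$), and the Taylor coefficient you computed is the correct one. Your candid flag that the remaining work is ``purely clerical'' is accurate.
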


\begin{proof}
  This follows from Hwang's {\cite{Hwang}} Theorem 1 and Lemma
  \ref{multfunction}.
\end{proof}

\section{Proof of the ``structure theorem''}

We break down the proof of Theorem \ref{theorem1} into three parts,
corresponding to the range $1 \leqslant \Delta \leqslant o (\sigma^{\alpha})$,
$1 \leqslant \Delta \leqslant o (\sigma)$ and $1 \leqslant \Delta \ll \sigma$.
Throughout $\sigma \assign \sigma (x)$ stands for a function such that $\sigma
(f ; x) \sim \sigma (x) \sim \sigma (g ; x)$.

Notice that for an $h \in \mathcal{C}$,
\[ \sigma^2 (h ; x) = \hat{\Psi}'' (h ; 0) \cdot \tmop{loglog} x + O (1), \]
Thus $\sigma (f ; x) \sim \sigma (g ; x)$ is equivalent to $\hat{\Psi}'' (f ;
0) = \hat{\Psi}'' (g ; 0)$. We will use these two observations without further
mention. We also turn the reader's attention to the definition of
$\mathcal{D}_f (x ; \Delta)$ given below. This notation will reappear
throughout the proof.

\subsection{The $1 \leqslant \Delta \leqslant o (\sigma^{\alpha})$ range}

\begin{proof}
  Let
  \[ \mathcal{D}_f (x ; \Delta) \assign \frac{1}{x} \cdot \# \left\{ n
     \leqslant x : \frac{f (n) - \mu (f ; x)}{\sigma (f ; x)} \geqslant \Delta
     \right\} . \]
  By Part \ref{Part1} of Lemma \ref{largedevlemma1}, $\mathcal{D}_f (x ;
  \Delta) \sim \int e^{- u^2 / 2} \mathd u / \sqrt{2 \pi}$ uniformly in $1
  \leqslant \Delta \leqslant o (\sigma^{1 / 3})$. Therefore $\mathcal{D}_f (x
  ; \Delta) \sim \mathcal{D}_g (x ; \Delta)$ for all $f, g \in \mathcal{C}$
  with $1 \leqslant \Delta \leqslant o (\sigma^{1 / 3})$. This proves Part
  \ref{Part1} of Theorem \ref{theorem1}.
  
  Let $\mathcal{E}_f (z) \assign A (f ; w (f ; z))$ with $A (z) = \hat{\Psi}
  (f ; z) - 1 - z \hat{\Psi}' (f ; z)$ and $\omega (f ; z)$ as defined in
  Lemma \ref{analytic}. Suppose that $\mathcal{D}_f (x ; \Delta) \sim
  \mathcal{D}_g (x ; \Delta)$ uniformly in $1 \leqslant \Delta \leqslant o
  (\sigma^{\alpha})$ with $1 / 3 < \alpha < 1$. By Lemma \ref{largedevlemma1},
  this is equivalent to,
  \begin{equation}
    (\mathcal{E}_f (\Delta / \sigma_{\Psi}) -\mathcal{E}_f (\Delta /
    \sigma_{\Psi})) \cdot \tmop{loglog} x = o (1), \label{consequence}
  \end{equation}
  uniformly in $1 \leqslant \Delta \leqslant o (\sigma_{\Psi}^{\alpha})$, with
  $\sigma_{\Psi}^2 = \hat{\Psi}'' (f ; 0) \cdot \tmop{loglog} x \sim \sigma^2
  (x)$. By Lemma \ref{analytic}, for $h \in \mathcal{C}$ the function
  $\mathcal{E}_h (z)$ is analytic in a neighborhood of zero. Expanding into a
  Taylor series
  \[ \mathcal{E}_h (z) \assign \sum_{k \geqslant 0} a_k (h) \cdot z^k, \]
  we conclude from (\ref{consequence}) that $a_j (f) = a_j (g)$ for all $j$
  such that
  \[ \left( \frac{\Delta}{\sigma_{\Psi}} \right)^j \cdot \tmop{loglog} x = o
     (1) . \]
  This holds for all $j \leqslant \varrho (\alpha) \assign \left\lceil (\alpha
  + 1) / (\alpha - 1) \right\rceil$, hence $a_j (f) = a_j (g)$ for $j
  \leqslant \varrho (\alpha)$. We will now show that this implies that the
  first $3 \leqslant k \leqslant \varrho (\alpha)$ moments of $\Psi (f ; t)$
  and $\Psi (g ; t)$ coincide.
  
  Note that $\varrho (\alpha) \geqslant 3$ since $\alpha > 1 / 3$. Since
  $\mathcal{E}_h (z) \assign A (h ; w (h ; z))$ and $a_j (f) = a_j (g)$ for $j
  \leqslant \varrho (\alpha)$ we have,
  \begin{equation}
    A (f ; w (f ; z)) = A (g ; w (g ; z)) + O \left( z^{\ell + 1} \right)
    \text{ , where } \ell \assign \varrho (\alpha) \label{starter}
  \end{equation}
  and where we write $O (z^k)$ to formally indicate terms of order $\geqslant
  k$ in the Taylor series expansion. Differentiating (formally) on both sides
  of ($\ref{starter}$) we obtain
  \[ - \hat{\Psi}'' (f ; 0) \omega (f ; z) = - \hat{\Psi}'' (g ; 0) \omega (g
     ; z) + O (z^{\ell}) . \]
  Since $\hat{\Psi}'' (f ; 0) = \hat{\Psi}'' (g ; 0)$ we get $\omega (f ; z) =
  \omega (g ; z) + O (z^{\ell})$. Expanding $A (g ; \omega (g ; z))$ into a
  Taylor series about $\omega (f ; z)$, we find that
  \begin{eqnarray*}
    A (g ; \omega (g ; z)) & = & A (g ; \omega (f ; z) + (\omega (g ; z) -
    \omega (f ; z)))\\
    & = & A (g ; \omega (f ; z)) + \sum_{k \geqslant 1} \frac{1}{k!} \cdot
    \left( \omega (g ; z) - \omega (f ; z) \right)^k \cdot A^{(k)} (g ; \omega
    (f ; z)) .
  \end{eqnarray*}
  Since $\omega (g ; z) - \omega (f ; z) = O (z^{\ell})$ the term $k \geqslant
  2$ contribute $O (z^{2 \ell})$. The term $k = 1$ equals to $- \omega (f ; z)
  \hat{\Psi}'' (g ; \omega (f ; z)) \cdot (\omega (g ; z) - \omega (f ; z))$
  and thus contributes $O (z^{\ell + 1})$ because $\omega (f ; z) = O (z)$. It
  follows that,
  \begin{equation}
    A (g ; \omega (g ; z)) = A (g ; \omega (f ; z)) + O (z^{\ell + 1}) .
    \label{almostthere}
  \end{equation}
  Inserting (\ref{starter}) into (\ref{almostthere}) we obtain $A (f ; w (f ;
  z)) = A (f ; w (f ; z)) + O (z^{\ell + 1})$. We substitute $z \longmapsto
  \omega^{- 1} (f ; z)$. Since $\omega^{- 1} (f ; z)$ is zero at $z = 0$ we
  have $\omega^{- 1} (f ; z) = O (z)$. Therefore, after substitution $A (f ;
  z) = A (g ; z) + O (z^{\ell + 1})$. Differentiating on both sides we get $z
  \hat{\Psi}'' (f ; z) = z \hat{\Psi}'' (g ; z) + O (z^{\ell})$. Hence
  \[ \hat{\Psi}'' (f ; z) = \hat{\Psi}'' (g ; z) + O (z^{\ell - 1}) . \]
  Looking at the coefficients in the Taylor series expansion of $\hat{\Psi}''
  (h ; z)$ we conclude that
  \begin{equation}
    \int_{- \infty}^{\infty} t^{k + 2} \mathd \Psi (f ; t) = \int_{-
    \infty}^{\infty} t^{k + 2} \mathd \Psi (g ; t), \label{moments}
  \end{equation}
  for all $k \leqslant \ell - 2 = \varrho (\alpha) - 2$, as desired.
  
  Conversely suppose that (\ref{moments}) holds. Then as shown in Lemma
  \ref{largedevlemma2} the asymptotic formulae for $\mathcal{D}_f (x ;
  \Delta)$ in the range $1 \leqslant \Delta \leqslant o (\sigma^{\alpha})$
  depends only on the first $\varrho (\alpha)$ moments of $\Psi (f ; t)$ and
  hence $\mathcal{D}_f (x ; \Delta) \sim \mathcal{D}_g (x ; \Delta)$ for any
  two $f, g \in \mathcal{C}$ such that (\ref{moments}) holds and $1 \leqslant
  \Delta \leqslant o (\sigma^{\alpha})$. \ \ 
\end{proof}

\subsection{The $1 \leqslant \Delta \leqslant o (\sigma$) range}

\begin{proof}
  If $\mathcal{D}_f (x ; \Delta) \sim \mathcal{D}_g (x ; \Delta)$ holds
  throughout the whole range $1 \leqslant \Delta \leqslant o (\sigma)$ then it
  also holds for $1 \leqslant \Delta \leqslant o (\sigma^{\alpha})$ for all
  $\alpha < 1 / 2$. Hence, by the result of the previous section (i.e Part
  \ref{Part2} of Theorem \ref{theorem1}),
  \begin{eqnarray}
    \int_{\mathbb{R}} t^k \mathd \Psi (f ; t) & = & \int_{\mathbb{R}} t^k
    \mathd \Psi (g ; t), \label{momentequality} 
  \end{eqnarray}
  for all $k = 3, 4, \ldots, \varrho (\alpha) = \left\lceil (1 + \alpha) / (1
  - \alpha) \right\rceil$. Letting $\alpha \rightarrow 1$ it follows that
  (\ref{momentequality}) holds for all $k \geqslant 3$. Since
  \[ \hat{\Psi} (f ; z) = 1 + \sum_{k \geqslant 1} \int_{- \infty}^{\infty}
     t^k \mathd \Psi (f ; t) \cdot \frac{z^k}{k!}, \]
  this implies that $\hat{\Psi} (f ; z) - \hat{\Psi} (g ; z) = az^2 + bz$ for
  some $a, b \in \mathbb{R}$. In particular we obtain $a^2 \cdot t^4 + b^2
  \cdot t^2 = | \hat{\Psi} (f ; \mathi t) - \hat{\Psi} (g ; \mathi t) |^2$.
  The right hand side is bounded by $4$ since $| \hat{\Psi} (h ; \mathi t) |
  \leqslant 1$ for an $h \in \mathcal{C}$. However the left-hand side diverges
  to infinity as $t \rightarrow \infty$, unless $a = 0 = b$. Letting $t
  \rightarrow \infty$ we conclude that $a = 0 = b$. Hence
  \[ \hat{\Psi} (f ; \mathi t) = \hat{\Psi} (g ; \mathi t) . \]
  By Fourier inversion it follows that $\Psi (f ; t) = \Psi (g ; t)$ almost
  everywhere. Since $\Psi (f ; t)$ and $\Psi (g ; t)$ are monotone, they have
  at most a countable set of discontinuities, hence $\Psi (f ; t) = \Psi (g ;
  t)$ for all $t$ except a countable subset.
  
  Conversely if $\Psi (f ; t) = \Psi (g ; t)$, except for at most a countable
  set of $t \in \mathbb{R}$, then $\hat{\Psi} (f ; z) = \hat{\Psi} (g ; z)$
  for all $z \in \mathbb{C}$. Hence $\mathcal{D}_f (x ; \Delta) \sim
  \mathcal{D}_g (x ; \Delta)$ for $1 \leqslant \Delta \leqslant o (\sigma)$,
  since by Lemma \ref{largedevlemma1} an asymptotic formulae for
  $\mathcal{D}_h (x ; \Delta)$ in the range $1 \leqslant \Delta \leqslant o
  (\sigma)$ depends only on $\hat{\Psi} (h ; z)$. 
\end{proof}

\subsection{The $1 \leqslant \Delta \leqslant c \sigma$ range}

\begin{proof}
  Throughout the proof the constants $c, \alpha, \beta$ are allowed to change
  from one occurence to another. Suppose that $\mathcal{D}_f (x ; \Delta) \sim
  \mathcal{D}_g (x ; \Delta)$ in the range $1 \leqslant \Delta \leqslant
  \varepsilon \sigma$ for some small but fixed $\varepsilon > 0$. By
  integration by parts, and simple bounds for $\mathcal{D}_f (x ; \Delta)$,
  $\mathcal{D}_g (x ; \Delta)$ (derived from Lemma \ref{multfunction} and
  Chebyschev's inequality),
  \begin{equation}
    \sum_{n \in S_f (x)} \exp \left( \Delta \left( \frac{f (n) - \mu (f ;
    x)}{\sigma (f ; x)} \right) \right) \sim \sum_{n \in S_g (x)} \exp \left(
    \Delta \left( \frac{g (n) - \mu (g ; x)}{\sigma (g ; x)} \right) \right)
    \label{equiv}
  \end{equation}
  uniformly in $(\varepsilon / 4) \sigma (x) \leqslant \Delta \leqslant
  (\varepsilon / 2) \sigma (x)$ and with
  \[ S_h (x) \assign \left\{ n \leqslant x : 1 \leqslant \frac{h (n) - \mu (h
     ; x)}{\sigma (h ; x)} \leqslant \varepsilon \sigma (x) \right\} \]
  for $h = f, g$. Note that for $\sigma^2 (h ; x) = \hat{\Psi}'' (h ; 0) \cdot
  \tmop{loglog} x + c + o (1)$ with $c$ a constant depending only on $h$.
  Thus,
  \[ \frac{1}{\sigma (g ; x)} = \frac{1}{\sigma (f ; x)} \cdot \left( 1 +
     \frac{\alpha}{\tmop{loglog} x} \right) + O \left( (\tmop{loglog} x)^{- 2}
     \right) \]
  for some constant $\alpha$. Note also that $\mu (g ; x) = \mu (f ; x) + c +
  o (1)$ for some constant $c$. Therefore we can re-write (\ref{equiv}) as
  \[ \sum_{n \in S_f (x)} \exp \left( \frac{\Delta}{\sigma (f ; x)} \cdot f
     (n) \right) \sim \sum_{n \in S_g (x)} \exp \left( \frac{\Delta}{\sigma (f
     ; x)} \cdot \left( 1 + \frac{\alpha}{\tmop{loglog} x} \right) \cdot g (n)
     \right) e^{- \beta \Delta / \sigma} \]
  with $\sigma = \sigma (x)$ and $\alpha, \beta$ constants. By Rankin's trick
  the integers $n \leqslant x$ that are in the complement of $S_f (x)$ and
  $S_g (x)$ contribute a negligible amount. Thus, we can replace the
  conditions $n \in S_f (x)$ and $n \in S_g (x)$ by $n \leqslant x$. Choose
  $\Delta = \kappa \sigma (f ; x)$ with an $\kappa \in (\varepsilon / 4,
  \varepsilon / 2)$ fixed but arbitrary. Using the mean-value theorem of Lemma
  \ref{multfunction} we obtain
  \begin{eqnarray*}
    \frac{L (f ; \kappa)}{\Gamma ( \hat{\Psi} (f ; \kappa))} \cdot (\log
    x)^{\hat{\Psi} (f ; \kappa) - 1} \sim \frac{L (f ; \kappa)}{\Gamma (
    \hat{\Psi} (g ; \kappa))} \cdot (\log x)^{\hat{\Psi} (g ; \kappa) - 1}
    \cdot e^{- \beta \kappa} &  & 
  \end{eqnarray*}
  with some constant $\beta$ (after a Taylor expansion in the $\hat{\Psi} (g ;
  \cdot)$ term). Since $\hat{\Psi} (f ; z) = \hat{\Psi} (g ; z)$ it follows
  that $L (f ; \kappa) = L (g ; \kappa) e^{- \beta \kappa}$ for $\varepsilon /
  4 < \kappa < \varepsilon / 2$. Since $L (f ; z)$ and $L (g ; z)$ are entire,
  we obtain $L (f ; z) = L (g ; z) e^{- z \beta}$ for all $z \in \mathbb{C}$,
  by analytic continuation. In particular the zero sets $\mathcal{Z}(L (f ;
  z))$ and $\mathcal{Z}(L (g ; z))$ of $L (f ; z)$ and $L (g ; z)$ must
  coincide. We will now show that this implies $f = g$.
  
  By definition of $L (g ; z)$,
  \[ \mathcal{Z}(L (g ; z)) = \left\{ \frac{(2 k + 1) \pi \mathi}{g (p)} +
     \frac{\log (p - 1)}{g (p)} \text{ } : \text{ } k \in \mathbb{Z}, p
     \text{ prime}, g (p) \neq 0 \right\} . \]
  Therefore if $\mathcal{Z}(L (f ; z)) =\mathcal{Z}(L (g ; z))$ then
  \begin{equation}
    \left\{ \frac{(2 k + 1) \pi \mathi}{g (p)} + \frac{\log (p - 1)}{g (p)}
    \right\} = \left\{ \frac{(2 \ell + 1) \pi \mathi}{f (q)} + \frac{\log (q -
    1)}{f (q)} \right\}, \label{setequality}
  \end{equation}
  for $k, \ell \in \mathbb{Z}$ and $p, q$ going through the set of primes for
  which $f (p), f (q) \neq 0$. If $g (2) \neq 0$ then looking at the common
  zero of real part 0 and smallest imaginary part we get $g (2) = f (2)$.
  
  Fix $p > 2$ a prime with $g (p) \neq 0$. Because of (\ref{setequality})
  there is a prime $q$ such that
  \[ \frac{(2 k + 1) \pi \mathi}{g (p)} + \frac{\log (p - 1)}{g (p)} =
     \frac{(2 \ell + 1) \pi \mathi}{f (q)} + \frac{\log (q - 1)}{f (q)}, \]
  and $f (q) \neq 0$. Hence
  \begin{equation}
    \frac{f (q)}{g (p)} \text{ } = \text{ } \frac{2 \ell + 1}{2 k + 1} \text{
    } = \text{ } \frac{\log (q - 1)}{\log (p - 1)} . \label{maineq}
  \end{equation}
  Write $p - 1 = m^r$ with $r \geqslant 1$ maximal and $m$ an positive
  integer. Necessarily $r = 2^a$ with $a \geqslant 0$; otherwise $p$ would
  factorise non-trivially. Exponentiating (\ref{maineq}), we get
  \begin{eqnarray*}
    q - 1 & = & \left( p - 1 \right)^{\frac{2 \ell + 1}{2 k + 1}} \text{ } =
    \text{ } m^{r \cdot \frac{2 \ell + 1}{2 k + 1}} .
  \end{eqnarray*}
  Note that $r \cdot \frac{2 \ell + 1}{2 k + 1} \in \mathbb{N}$ since $r
  \geqslant 1$ was choosen maximal. Again $r \cdot \frac{2 \ell + 1}{2 k + 1}
  = 2^a \cdot \frac{2 \ell + 1}{2 k + 1}$ must be a power of two, otherwise
  $q$ would factorize non-trivially. Therefore the ratio $(2 \ell + 1) / (2 k
  + 1)$ is a power of two, hence $\ell = k$. By (\ref{maineq}) it follows that
  $p = q$ and $g (p) = f (p)$. Therefore $g (p) = f (p)$ for all prime $p$
  with $g (p) \neq 0$. Repeating this argument with $f$ in place of $g$ we
  obtain $f (p) = g (p)$ for all primes $p$ such that $f (p) \neq 0$. Hence,
  either $f (p) \neq 0$ or $g (p) \neq 0$, in which case $f (p) = g (p)$, or
  in the remaining case $f (p) = 0 = g (p)$. We conclude that $f (p) = g (p)$
  for all primes $p$, hence $f = g$, since $f, g$ are strongly additive.
\end{proof}

\end{document}